\newtheorem{theorem}{Theorem}
\newtheorem{lemma}{Lemma}
\newtheorem{proposition}{Proposition}
\newtheorem{corollary}{Corollary}
\theoremstyle{definition}
\newtheorem{definition}{Definition}
\newtheorem{example}{Example}
\newcommand{\Z}[1]{\mathbb{Z}_{#1}}
\newcommand{\z}{\mathbb{Z}}
\begin{document}

\title{Results on Vanishing Polynomials and Polynomial Root Counting}
\author[1]{Matvey Borodin\thanks{Corresponding author: \href{mailto:matveyborodin1@gmail.com}{matveyborodin1@gmail.com}}}
\affil[1]{Brookline High School}
\author[2]{Ethan Liu}
\affil[2]{The Harker School}
\author[3]{Justin Zhang}
\affil[3]{Bergen County Academics}
\date{}
\maketitle

\begin{abstract}
We study the set of algebraic objects known as vanishing polynomials (the set of polynomials that annihilate all elements of a ring) over general commutative rings with identity. These objects are of special interest due to their close connections to both ring theory and the technical applications of polynomials, along with numerous applications to other mathematical and engineering fields. We first determine the minimum degree of monic vanishing polynomials over a specific infinite family of rings of a specific form and consider a generalization of the notion of a monic vanishing polynomial over a subring. We then present a partial classification of the ideal of vanishing polynomials over general commutative rings with identity of prime and prime square orders. Finally, we prove some results on rings that have a finite number of roots and propose a technique that can be utilized to restrict the number of roots polynomials can have over certain finite commutative rings.
\end{abstract}

\section{Introduction}

Polynomial rings are some of the most fundamental and extensively studied objects in mathematics. The study of vanishing polynomials, which are polynomials that evaluate to $0$ at every element of a ring, connects these polynomial rings to applications in technology. In particular, vanishing polynomials, as well as the closely related concept of polynomial functions, are used in vanishing component analysis, homomorphic encryption, and electrical engineering, as expanded on in \cite{livni2013vanishing}, \cite{geelen2023polynomial}, and \cite{meredith2007polynomial}, respectively.


A complete description of such vanishing polynomials over the integers was given in \cite{SINGMASTER, Niven} while a more general result for multiple variables was found in \cite{GREUEL}. It is well known the set of vanishing polynomials is an ideal and the quotient of the original polynomial ring by this ideal gives the ring of polynomial functions. Such a ring of polynomial functions over $\Z{n}$, the ring of integer residue classes modulo $n$, was explored in \cite{specker, GUHA}. Throughout this paper, we build on these works and examine the structure of the ideal of vanishing polynomials and polynomial functions for more general rings.

In Section~\ref{vanishing_polynomials}, we begin with definitions of essential terms followed by background results that are used throughout the paper.
In Section~\ref{general_of_monic}, we develop the notion of a monic vanishing polynomial of minimal degree and study its properties over an infinite class of rings of a specific form.
In Section~\ref{van_of_prime}, we examine vanishing polynomials over rings of prime power (not necessarily integral domains or integer modulo rungs). 
Finally, in Section~\ref{counting_roots} we examine the properties of rings in which polynomials over the ring have a finite number of roots, as well as provide a methodology to limit the possible number of roots of polynomials over a product ring.

\section{Preliminaries} \label{vanishing_polynomials}

Throughout this paper, we assume rings to be commutative and with identity unless otherwise specified. Let $R$ be a ring and $R[x]$ be the ring of polynomials with coefficients in $R$. We study objects in $R[x]$ known as \emph{vanishing polynomials}. In this section, we define vanishing polynomials and note their relation to polynomial functions which was mentioned in the previous section, which is key to many of their applications to technological fields. 

\begin{definition}
A \emph{polynomial} $F(x)$ in a polynomial ring $R[x]$ is a formal sum $$a_{n}x^{n}+a_{n-1}x^{n-1}+\cdots+a_{1}x+a_{0}$$ for some nonnegative integer $n$, where each $a_{i} \in R$ and $x$ is an indeterminate.
\end{definition}

\begin{definition}
A \emph{vanishing polynomial} $F(x) \in R[x]$ is a polynomial such that $F(a) = 0$ for all $a \in R$. By definition, $0$ itself is a vanishing polynomial.
\end{definition}

\begin{example}
Consider the polynomial $F(x)=x^2+x$ over $\z_{2}$. Notice that $F(0)=0$ and $F(1)=0$. Therefore, $x^2+x$ is a vanishing polynomial over $\z_{2}$. Even infinite rings can have a finite nonzero vanishing polynomial: consider the ring $R = \Pi_{n=1}^{\infty}\z_{2}$. Notice that $x^2+x$ is vanishing in this ring as well. In fact, $ax^2+ax$ is a vanishing polynomial for any $a \in R$. 
\end{example}

The set of vanishing polynomials over a ring $R[x]$ is known to form an ideal. Thus, taking $I$ to be the ideal of vanishing polynomials, it makes sense to consider the quotient $R[x]/I$. In order to understand the significance of this operation we must first consider the distinction between a \emph{polynomial} and a \emph{polynomial function}. 

\begin{definition}
A \emph{polynomial function} $f : R \to R$ is a function on $R$ for which there exists a polynomial $F(x) \in R[x]$ such that $f(r)=F(r)$ for all $r \in R$.
\end{definition}

When distinguishing between polynomials and polynomials functions, we refer to the polynomials with uppercase letters, such as $F(x)$, and the corresponding polynomial functions with a lowercase letter, such as $f(x)$. Note that there is a natural injective mapping between polynomials $F(x)$ and polynomial functions $f(x)$ over any given ring, but each polynomial function $f(x)$ corresponds to an infinite number of polynomials, assuming the ring has a nonzero vanishing polynomial.
The following is a statement found in \cite{gilmer1999ideal} connecting polynomials and polynomial functions.

\begin{proposition}
\label{quotient}
The quotient $R[x] / I$, where $I$ is the ideal of vanishing polynomials over $R[x]$, is isomorphic to the ring of polynomial functions $f:R \to R$. 
\end{proposition}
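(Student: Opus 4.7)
The plan is to apply the First Isomorphism Theorem for rings to a natural evaluation map. First I would introduce the ring $\mathrm{Fun}(R,R)$ of all set-theoretic functions $R\to R$ equipped with pointwise addition and multiplication, so that the ring of polynomial functions sits inside it as a subring (this is implicit in the proposition, but needs to be acknowledged so that the codomain of the map is actually a ring).

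Next I would define the evaluation map
\[
\phi : R[x] \longrightarrow \mathrm{Fun}(R,R), \qquad \phi(F) = f,
\]
where $f(r) = F(r)$ for every $r \in R$. The key verification is that $\phi$ is a ring homomorphism: that $\phi(F+G) = \phi(F)+\phi(G)$, $\phi(FG) = \phi(F)\phi(G)$, and $\phi(1)$ is the constant function $1$. Each of these reduces to the fact that substitution of a fixed $r \in R$ into a formal sum or product of polynomials yields the corresponding sum or product in $R$, which in turn uses only commutativity and distributivity in $R$.

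Then I would identify the two relevant pieces. By the very definition of a polynomial function, the image of $\phi$ is exactly the set of polynomial functions $R \to R$, which is therefore a subring of $\mathrm{Fun}(R,R)$. On the other hand, $F \in \ker\phi$ means $F(r)=0$ for every $r\in R$, which is precisely the condition that $F$ is a vanishing polynomial, so $\ker\phi = I$. Applying the First Isomorphism Theorem yields the desired isomorphism $R[x]/I \cong \mathrm{Im}(\phi) = \{\text{polynomial functions on }R\}$.

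I do not expect any serious obstacle here; the result is essentially a bookkeeping exercise. The only mildly delicate point is being careful that we work with polynomials as \emph{formal} expressions on the left and with actual functions on the right, so that the statement ``$\phi$ is injective modulo $I$'' has content; the non-injectivity of $\phi$ on $R[x]$ itself is exactly what makes the quotient by $I$ necessary, and this is worth one sentence of commentary after invoking the isomorphism theorem.
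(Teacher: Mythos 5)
Your argument is correct and is the standard one: evaluation gives a ring homomorphism $R[x]\to\mathrm{Fun}(R,R)$ (multiplicativity using commutativity of $R$), its image is the ring of polynomial functions, its kernel is exactly $I$, and the First Isomorphism Theorem finishes. The paper does not prove this proposition itself but cites it from the literature, so there is no competing argument to compare against; your writeup would serve as a self-contained proof.
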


An additional important result that we utilize is the following lemma, which allows us to decompose polynomials over product rings. A proof of the result is given in \cite{borodin2023ideal}.

\begin{lemma} \label{polynomial_product}
Given $R \cong R_{1} \times \cdots \times R_{k}$, we have
$$R[x] \cong R_{1}[x] \times \cdots \times R_{k}[x].$$
\end{lemma}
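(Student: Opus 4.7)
The plan is to exhibit an explicit isomorphism by applying the given componentwise decomposition of $R$ to each coefficient of a polynomial. Let $\psi : R \to R_1 \times \cdots \times R_k$ denote the assumed ring isomorphism, and let $\pi_j : R \to R_j$ be the composition of $\psi$ with the $j$-th projection, so each $\pi_j$ is a surjective ring homomorphism. Define
\[
    \Phi : R[x] \longrightarrow R_1[x] \times \cdots \times R_k[x], \qquad \Phi\!\left(\sum_{i=0}^{n} a_i x^i\right) = \left(\sum_{i=0}^{n} \pi_1(a_i)\, x^i,\; \ldots,\; \sum_{i=0}^{n} \pi_k(a_i)\, x^i\right).
\]
The goal is to show $\Phi$ is a well-defined ring isomorphism.

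First, I would check that $\Phi$ is a ring homomorphism. Additivity and $\Phi(1) = (1,\ldots,1)$ are immediate because addition of polynomials and of the identity are both computed coefficientwise, and each $\pi_j$ is additive and unital. Multiplicativity is the main calculation: the coefficient of $x^n$ in $FG$ is $\sum_{i+\ell = n} a_i b_\ell$, and since each $\pi_j$ is a ring homomorphism, applying $\pi_j$ gives $\sum_{i+\ell=n} \pi_j(a_i)\pi_j(b_\ell)$, which is exactly the coefficient of $x^n$ in the $j$-th component of $\Phi(F)\Phi(G)$.

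Next I would verify bijectivity. For injectivity, if $\Phi(F) = 0$ then $\pi_j(a_i) = 0$ for every $i$ and every $j$, so the tuple $(\pi_1(a_i), \ldots, \pi_k(a_i)) = \psi(a_i)$ is zero, whence $a_i = 0$ by injectivity of $\psi$, so $F = 0$. For surjectivity, given $(G_1, \ldots, G_k)$ with $G_j(x) = \sum_{i} b_{i,j} x^i$, let $N$ be larger than the degree of every $G_j$ and pad with zero coefficients so each $G_j = \sum_{i=0}^{N} b_{i,j} x^i$. Setting $a_i = \psi^{-1}(b_{i,1}, \ldots, b_{i,k}) \in R$ and $F = \sum_{i=0}^{N} a_i x^i$, we get $\Phi(F) = (G_1, \ldots, G_k)$.

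The only point requiring care is the padding step in surjectivity, since the component polynomials $G_j$ will in general have different degrees; this is handled by simply working with a common upper bound $N$ and treating missing coefficients as zero. Otherwise the argument is essentially a routine verification that coefficientwise application of a ring isomorphism is itself a ring isomorphism on the polynomial rings.
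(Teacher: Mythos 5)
Your proof is correct, and it is the standard argument: applying the isomorphism $R \cong R_1 \times \cdots \times R_k$ coefficientwise and verifying that this yields a ring isomorphism of polynomial rings. The paper itself does not include a proof of this lemma --- it defers to the cited reference \cite{borodin2023ideal} --- but your argument is exactly the routine verification one would expect there, and all the steps (homomorphism property via the convolution formula, injectivity from injectivity of $\psi$, surjectivity via padding to a common degree bound) check out.
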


\section{Generalization of Monic Vanishing Polynomials} \label{general_of_monic}

In this section, we study the monic vanishing polynomial of minimal degree over a general ring $R$: define $s(R)$ to be the minimum positive integer $m$ such that there exists a polynomial $P(x)\in R[x]$ of degree less than $m$ for which $P(r)=r^m$ for all elements $r\in R$. In other words, $s(R)$ denotes the minimal $m$ for which the polynomial function $x\mapsto x^{m}$ corresponds to a polynomial in $R[x]$ of degree less than $m$. 

We can similarly define an extension of this notion to subrings of $R[x]$. Define $s(S;R)$, where $S$ is a subring of $R$, to equal the minimal degree $m$ such that a polynomial $P(x)\in S[x]$ of degree less than $m$ corresponds to the polynomial function $x\mapsto x^m$. In essence, the distinction between $s(S;R)$ and $s(R)$ is that $s(S;R)$ incorporates the additional restriction that the coefficients of $P(x)$ must be in the subring $S$, rather than the more relaxed condition of being in $R$.

These two characteristic numbers of a ring, $s(R)$ and $s(S;R)$, are introduced in \cite{specker2021polyfunctions}, where the case when $S=R',$, in which $R'$ denotes the subring generated by $1$, is specifically studied, and various number-theoretic and combinatorial bounds are given on the value of $s(R';R)$ if $s(R)$ is finite. In this section, we directly compute the value of $s(R)$ for an infinite class of rings that satisfy a specific form and use this to compute a bound on $s(R';R)$ for this class of rings, generalizing a result presented in \cite{specker2021polyfunctions}.

Let $U$ denote the set of all rings of the form $\mathbb{Z}_2[x]/(x^{a}+x^{a+1}),$ where $a\geq 3.$ 

\begin{lemma} \label{lem1}
For all rings $R$ in the set $U$, we have $s(R)\leq 4$.
\end{lemma}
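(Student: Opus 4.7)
The plan is to produce, for each $a\ge 3$, an explicit polynomial $P(y)\in R[y]$ of degree less than $4$ with $P(r)=r^4$ for every $r\in R$; equivalently, to exhibit a monic vanishing polynomial of degree $4$ over $R$. The key simplification is the defining relation $x^a+x^{a+1}=x^a(1+x)=0$, which in characteristic $2$ forces $x^{a+1}=x^a$ and hence $x^k=x^a$ for every $k\ge a$. Writing each $r\in R$ in the canonical form $r=c_0+c_1x+\cdots+c_ax^a$ with $c_i\in\mathbb{F}_2$ and applying the Frobenius identity $(\alpha+\beta)^2=\alpha^2+\beta^2$, I would first compute $r^2$ and then $r^4$ in closed form, collapsing all high powers to $x^a$. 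This gives an explicit expression of the shape $r^4=c_0+\ell(c_1,\dots,c_a)\,x^a$ for some $\mathbb{F}_2$-linear functional $\ell$, which serves as the target that $P(r)$ must match.

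Motivated by the case $a=3$, where direct computation on the $16$ elements shows that $P(y)=(1+x)y^2+xy$ satisfies $P(r)=r^4$, I would try candidate polynomials of the form $P(y)=f(x)y^2+g(x)y$ with $f,g\in R$ built from $x^{a-2}$ or from partial sums such as $1+x+\cdots+x^{a-1}$, adjusting for the particular $\ell$ arising in the computation of $r^4$. The verification is a coefficient-by-coefficient identity in $c_0,\dots,c_a$ after reducing modulo $x^a+x^{a+1}$, so every cross term produced by expanding $fr^2+gr$ must either match a coefficient of $r^4$ or collapse to $0$ via the relation $x^a(1+x)=0$.

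As an alternative organization, I would invoke the Chinese Remainder Theorem: since $\gcd(x^a,1+x)=1$ in $\mathbb{F}_2[x]$, one has the decomposition $R\cong \mathbb{F}_2[x]/(x^a)\times \mathbb{F}_2$, and by Lemma~\ref{polynomial_product}, $R[y]$ splits as $\mathbb{F}_2[x]/(x^a)[y]\times\mathbb{F}_2[y]$, so it suffices to produce the required representative of $y\mapsto y^4$ on each factor and stitch the two coefficients back together using the CRT lift. The $\mathbb{F}_2$-factor is handled by $y^4=y$; on the $\mathbb{F}_2[x]/(x^a)$-factor one exploits that $r=c_0+n$ with $n$ in the maximal ideal yields $r^4=c_0+n^4$ in characteristic $2$, collapsing further as $a$ is small relative to powers of $2$.

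The main obstacle is the explicit construction of the coefficients $f,g$ (or of the $\mathbb{F}_2[x]/(x^a)$-factor polynomial) in a way that is uniform in $a$: while the small cases $a=3,4$ are forgiving because high powers vanish outright, for larger $a$ the linear functional $\ell$ and the cross terms in $fr^2+gr$ interact in a more intricate way, and the choice of $f,g$ must be carefully adapted so that the relation $x^a(1+x)=0$ kills exactly the right combinations. I anticipate that the cleanest verification writes $P(r)-r^4$ as a bilinear expression in the $c_i$ and the basis monomials $\{1,x,\dots,x^a\}$ and then checks termwise vanishing using only the single defining relation.
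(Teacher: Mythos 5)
Your write-up is a plan rather than a proof: the only case you actually settle is $a=3$, where $P(y)=(1+x)y^2+xy$ does satisfy $P(r)=r^4$, and the step that would prove the lemma for general $a$ --- the uniform construction of the coefficients $f,g$ (or of the lift through the decomposition $R\cong\mathbb{Z}_2[x]/(x^a)\times\mathbb{Z}_2$) --- is exactly the part you defer as ``the main obstacle.'' Nothing in the proposal produces a degree-$\le 3$ representative of $y\mapsto y^4$ for any $a\ge 4$. A smaller symptom of the same problem: the claimed closed form $r^4=c_0+\ell(c_1,\dots,c_a)x^a$ already fails for $a\ge 5$, where $r^4$ contains the term $c_1x^4$ and $x^4\neq x^a$.

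The obstacle is not a matter of cleverness: it cannot be overcome, and your own reduction shows why. For $a\ge 4$ the ring $\mathbb{Z}_2[x]/(x^4)$ is a quotient of $R=\mathbb{Z}_2[x]/(x^a+x^{a+1})$ (since $x^4$ divides $x^a(1+x)$), and a monic quartic vanishing polynomial over $R$ would reduce to one over $\mathbb{Z}_2[x]/(x^4)$. No such $F(y)=y^4+b_3y^3+b_2y^2+b_1y+b_0$ exists there: $F(0)=0$ gives $b_0=0$; $F(x^2)=b_1x^2=0$ gives $b_1\in(x^2)$; $F(x)=0$ then forces $b_2\in(x)$; comparing $F(1+x)$ with $F(1)+F(x)$ gives $b_3x(1+x)=0$ and hence $b_3\in(x^3)$ because $1+x$ is a unit; yet $F(1)=0$ demands $b_1+b_2+b_3=1$, impossible since the left-hand side lies in the maximal ideal $(x)$. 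Hence $s(R)\ge 5$ for every $a\ge 4$, and the lemma as stated holds only for $a=3$. For comparison, the paper's own argument exhibits the congruence $x^{a-2}P+(x^{a-3}+x^{a-2})P^2+x^{a-3}P^4\equiv 0\pmod{x^a+x^{a+1}}$, which is a correct identity but whose $P^4$-coefficient is $x^{a-3}$, not $1$; it witnesses $s(R)\le 4$ only in the single case $a=3$ where it is monic. So your approach and the paper's stall at the same place, and that place is precisely where the claim breaks down.
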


\begin{proof}

Note that this proof can essentially be thought of as multiplying the modular equation presented in Lemma 8 of \cite{specker2021polyfunctions} through by $x^{a-3},$ but we present the full proof for completeness.

It suffices to show that for all $P\in \mathbb{Z}_2[x],$ it holds that

\begin{equation}\label{the_poly}
x^{a-2}P + (x^{a-3}+x^{a-2})P^2 + x^{a-3}P^4\equiv 0\pmod{x^a+x^{a+1}}.
\end{equation}

Consider when $P=x^k$ for a positive integer $k$. Equation~\ref{the_poly} evaluates to

\begin{align*}
&x^{a-2}(x^k)+(x^{a-3}+x^{a-2})(x^{2k})+x^{a-3}(x^{4k})
\\ &= x^{a+k-2} + x^{a+2k-3} + x^{a+2k-2} + x^{a+4k-3}.
\end{align*}

To verify that this expression vanishes $\pmod{x^{a}+x^{a+1}},$ we do casework based on the value of $k$.

The cases when $k=0$ and $k=1$ are easy to verify manually. When $k=0,$ the expression is equal to $x^{a-2} + x^{a-3}+x^{a-2} + x^{a-3}\equiv 0\pmod{x^a+x^{a+1}}$, and when $k=1,$ the expression is equal to $x^{a-1}+x^{a-1}+x^{a}+x^{a+1}\equiv 0\pmod{x^a+x^{a+1}}$.

When $k\geq 2,$ the expression can be greatly simplified by noting that $\pmod{x^a+x^{a+1}},$ we have that $x^{a}\equiv -x^{a+1}.$ Since all the coefficients of the terms are in $\mathbb{Z}_2[x]/(x^{a}+x^{a+1}),$ it follows that $-x^{a+1}$ is identically equal to $x^{a+1}$. Thus, we have that

$$x^a\equiv x^{a+1}\equiv x^{a+2}\equiv x^{a+3}\equiv \cdots.$$

If $k\geq 2,$ then $a+k-2, a+2k-3, a+2k-2,$ and $a+4k-3$ are all at least $a,$ so it follows that the expression is equal to $x^a+x^a+x^a+x^a\equiv 0\pmod{x^a+x^{a+1}}$.

Suppose Equation~\ref{the_poly} vanishes for $P_1$ and $P_2$. Then, due to the additive properties of $\mathbb{Z}_2[x],$ we have
\begin{align*}
&x^{a-2}(P_1+P_2)+(x^{a-3}+x^{a-2})(P_1+P_2)^2
\\ &+x^{a-3}(P_1+P_2)^4 \\
&=\sum_{i=1}^{2} x^{a-2}P_i+(x^{a-3}+x^{a-2})P_i^2+x^{a-3}P_i^4
\\ & \equiv 0\pmod{x^a+x^{a+1}}, 
\end{align*}
demonstrating that $P_1+P_2$ vanishes in Equation~\ref{the_poly} as well. Thus, any additive combination of monomials vanishes in Equation~\ref{the_poly}, and since all polynomials in $\mathbb{Z}_2[x]$ can be expressed as the sum of monomials, we have shown that Equation~\ref{the_poly} vanishes for all $P\in \mathbb{Z}_2[x],$ as desired.

\end{proof}

\begin{lemma} \label{lem2}
For all rings $R$ in the set $U$, we have $s(R)\geq 4$.
\end{lemma}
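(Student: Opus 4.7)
The plan is to show, for each $m \in \{1, 2, 3\}$, that no polynomial $P(x) \in R[x]$ of degree less than $m$ satisfies $P(r) = r^m$ for every $r \in R$. Writing $R = \mathbb{Z}_2[t]/(t^a + t^{a+1})$, where $t$ denotes the image of the generating variable (so that I can reserve $x$ for the indeterminate of $R[x]$), the case $m = 1$ is immediate since $R$ has more than one element, and the case $m = 2$ is handled by a short substitution argument: if $\alpha x + \beta$ represented the squaring function, evaluating at $r = 0$ and $r = 1$ would force $\beta = 0$ and $\alpha = 1$, and then evaluating at $r = t$ would force $t^2 + t = 0$ in $R$, which fails.

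The real work is the case $m = 3$. Suppose for contradiction that $\alpha x^2 + \beta x + \gamma \in R[x]$ represents the cubing function. Substituting $r = 0$ and $r = 1$ immediately yields $\gamma = 0$ and $\alpha + \beta = 1$. The decisive step is to substitute $r = t$ and $r = t+1$ and subtract: expanding $(t+1)^2 = t^2 + 1$ and $(t+1)^3 = t^3 + t^2 + t + 1$ in characteristic $2$, the two $r^3$ contributions partially cancel, leaving the identity
\begin{equation*}
\alpha + \beta \;=\; t^2 + t + 1
\end{equation*}
in $R$. Combined with $\alpha + \beta = 1$, this forces $t^2 + t \equiv 0$ in $R$.

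To finish, I would observe that $t^2 + t$ cannot vanish in $R$: an element of $\mathbb{Z}_2[t]$ vanishes in the quotient $\mathbb{Z}_2[t]/(t^a + t^{a+1})$ only if it is divisible (in $\mathbb{Z}_2[t]$) by $t^a + t^{a+1}$, which has degree $a + 1 \geq 4$; since $t^2 + t$ has degree $2$, the only multiple of $t^a + t^{a+1}$ of that degree is $0$. This contradicts $t^2 + t = 0$, completing the proof that $s(R) \geq 4$.

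The main obstacle is choosing test values that are informative enough. Using only $r \in \{0, 1, t\}$ in the $m = 3$ case leaves the system underdetermined and fails to rule out a valid $(\alpha, \beta)$; introducing $r = t + 1$ is what exploits the characteristic-$2$ arithmetic so that the $r^3$ terms interact and produce the identity $t^2 + t = 0$. Beyond picking this substitution, every remaining step is a direct expansion.
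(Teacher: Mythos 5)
Your proof is correct and follows essentially the same route as the paper: both arguments test the putative low-degree representation of $r\mapsto r^m$ at the four elements $0$, $1$, $t$, $1+t$ and use characteristic-$2$ cancellation to force $t^2+t=0$ in $R$, which is impossible since $a+1\geq 4$. Your case-by-case normalization of the leading coefficient to $1$ even sidesteps the paper's slightly more delicate step of arguing that $b_3$ divisible by $x^{a-1}$ is incompatible with monicity.
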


\begin{proof}
For the sake of contradiction, suppose that $s(R)\leq 3$. In other words, suppose that there exist coefficients $b_0, b_1, b_2, b_3\in R$ (not necessarily nonzero) for which 
\begin{equation}\label{another_poly}
b_0+b_1P+b_2P^2+b_3P^3 \equiv 0\pmod{x^a+x^{a+1}}
\end{equation}
for all $P\in \mathbb{Z}_2[x]$. Then, in particular, Equation~\ref{another_poly} must vanish when $P=0, 1, x, 1+x$.

When $P=0,$ Equation~\ref{another_poly} evaluates to $b_0,$ so $b_0=0$.

Substituting $P=1+x,$ we simplify the expression:

\begin{align}
&b_1(1+x)+b_2(1+x)^2+b_3(1+x)^3 \nonumber
\\ &= b_1(1+x) + b_2(1+x^2)+b_3(1+x+x^2+x^3) \nonumber
\\ &\equiv 0\pmod{x^a+x^{a+1}}. \label{equa3}
\end{align}

We can also substitute $P=1$ and $P=x$, respectively, to yield the following equations: 

\begin{align}
b_1+b_2+b_3&\equiv 0\pmod{x^a+x^{a+1}},\label{equa1}
\\ b_1x+b_2x^2+b_3x^3&\equiv 0\pmod{x^a+x^{a+1}}. \label{equa2}
\end{align}

We can then subtract Equation~\ref{equa1} and Equation~\ref{equa2}, respectively, from Equation~\ref{equa3}, yielding $b_3(x+x^2)\equiv 0\pmod{x^a+x^{a+1}}$.

As a result, $b_3$ is forced to be divisible by $x^{a-1}$, but since Equation~\ref{another_poly} must be monic according to the definition of $s(R),$ this implies that $b_3=0$. 

Equation~\ref{equa1} now simplifies to $b_1+b_2\equiv 0\pmod{x^a+x^{a+1}}$ and Equation~\ref{equa2} simplifies to $b_1x+b_2x^2\equiv 0\pmod{x^a+x^{a+1}}$. The former expression yields that $b_1=b_2,$ and substituting this into the latter yields $b_2(x+x^2)\equiv 0\pmod{x^a+x^{a+1}}$. Using analogous logic as we did to conclude that $b_3=0,$ we see that $b_2=b_1=0$. 

So we have that $b_3=b_2=b_1=b_0=0,$ a contradiction (Equation~\ref{another_poly} no longer has a well-defined degree), as desired.

\end{proof}

\begin{theorem}
For all rings $R$ in the set $U,$ we have $s(R) = 4$.
\end{theorem}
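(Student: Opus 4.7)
The theorem follows immediately from the two preceding lemmas, so my proof proposal is essentially to invoke them and combine. In more detail, Lemma~\ref{lem1} establishes the upper bound $s(R) \leq 4$ by exhibiting, for every $R \in U$, a concrete identity showing that the polynomial function $x \mapsto x^4$ agrees with a polynomial of degree less than $4$ over $R$; explicitly, the degree-$3$ polynomial $x^{a-2}x + (x^{a-3} + x^{a-2})x^2 + x^{a-3}x^4$ (viewed as having coefficients in $R = \mathbb{Z}_2[x]/(x^a + x^{a+1})$) represents the function $P \mapsto P^4$ minus itself modulo the vanishing relation.

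Lemma~\ref{lem2} establishes the matching lower bound $s(R) \geq 4$ by showing, via evaluation at $P = 0, 1, x, 1+x$, that no monic polynomial of degree at most $3$ with coefficients in $R$ can represent the function $x \mapsto x^3$ (or lower powers). The argument forces all coefficients to vanish, contradicting the monicity requirement built into the definition of $s(R)$.

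Therefore, the plan is simply to combine these two bounds: by Lemma~\ref{lem1} we have $s(R) \leq 4$, and by Lemma~\ref{lem2} we have $s(R) \geq 4$, so $s(R) = 4$. There is no real obstacle to overcome at this stage, since all of the substantive work has already been carried out in the two lemmas; the only thing to check is that the two bounds refer to the same quantity under the same definition, which they do since both lemmas are stated for arbitrary $R \in U$.
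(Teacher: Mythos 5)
Your proposal is correct and matches the paper's proof exactly: the theorem is obtained by combining the upper bound from Lemma~\ref{lem1} with the lower bound from Lemma~\ref{lem2}, both of which apply to every $R \in U$. No further argument is needed.
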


\begin{proof}
This immediately follows from Lemma~\ref{lem1} and Lemma~\ref{lem2}
\end{proof}

We now obtain a corresponding upper bound for $s(R';R)$ based on the value of $s(R)$. Let $\text{lcm}(n)$ denote $\text{lcm}[1,2,\ldots, n].$ The theorem below is from \cite{specker2021polyfunctions}.

\begin{theorem}
\cite[Theorem 7]{specker2021polyfunctions}
If $x=s(R)$ is finite, then $s(R';R)\leq \emph{lcm}(n)+n,$ where $n=x!^{(2x)^{x}x}$
\end{theorem}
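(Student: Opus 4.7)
The plan is to reduce the statement to a pigeonhole argument on the powers of $y$ inside the ring of polynomial functions $F = R[y]/I$, which by Proposition~\ref{quotient} is exactly the quotient by the ideal $I$ of vanishing polynomials. Using $s(R) = x$, I would fix a vanishing polynomial $Q(y) = y^x - \sum_{i<x} a_i y^i$ with $a_0, \ldots, a_{x-1} \in R$. Every $y^k$ with $k \geq x$ then reduces modulo $I$ to $y^k \equiv c_0^{(k)} + c_1^{(k)} y + \cdots + c_{x-1}^{(k)} y^{x-1}$, and the recurrence $c_j^{(k+1)} = c_{j-1}^{(k)} + c_{x-1}^{(k)} a_j$ (with $c_{-1}^{(k)} := 0$) forces every $c_i^{(k)}$ into the $R'$-subring $R'' := R'[a_0, \ldots, a_{x-1}] \subseteq R$.

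The key step is to establish a cardinality bound $|\{y^k : k \geq 0\}| \leq n$ for $n = x!^{(2x)^x x}$. Since every element of $R$---in particular each $a_i$---is a root of the monic $Q(y)$, each $a_i$ satisfies a monic degree-$x$ relation over $R''$ itself, and so $R''$ admits an explicit finite enumeration of its reduced $R'$-monomials in the generators $a_0, \ldots, a_{x-1}$. Combining this with the observation that each $y^k$ in $F$ is determined by its $x$-tuple $(c_0^{(k)}, \ldots, c_{x-1}^{(k)}) \in (R'')^x$, a careful count should yield the stated bound. I expect this combinatorial bookkeeping to be the main obstacle: extracting the precise exponent $(2x)^x \cdot x$ will require tracking how the recurrence on the $c_i^{(k)}$ interacts with the monic relations on the $a_i$'s, and the specific shape of $n$ suggests a nested enumeration rather than a single clean estimate.

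Once the cardinality bound is in hand, the conclusion follows from pigeonhole. The sequence $y^0, y^1, y^2, \ldots$ in $F$ takes at most $n$ distinct values, so it is eventually periodic with preperiod at most $n$ and period at most $n$. Every positive integer at most $n$ divides $\text{lcm}(n)$, so the period divides $\text{lcm}(n)$ and we obtain $y^{n + \text{lcm}(n)} \equiv y^n \pmod{I}$; that is, $r^{n + \text{lcm}(n)} = r^n$ for every $r \in R$. Taking $P(y) = y^n \in R'[y]$, this polynomial has degree $n < n + \text{lcm}(n)$ and realizes the function $r \mapsto r^{n + \text{lcm}(n)}$, so $s(R';R) \leq \text{lcm}(n) + n$, as required.
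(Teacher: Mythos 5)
First, a point of order: the paper does not prove this statement at all --- it is quoted verbatim from the cited reference, so there is no internal proof to compare against. Judged on its own terms, your proposal has the right architecture, and the very shape of the bound ($x!$ raised to a tower built from $2x$ and $x$, then fed into $\mathrm{lcm}(n)+n$) strongly indicates it is also the intended one: reduce the powers $y^k$ modulo the ideal $I$ of vanishing polynomials using a monic vanishing polynomial of degree $x$, bound the number of distinct power functions by $n$, deduce eventual periodicity with preperiod and period at most $n$, and conclude $y^{n+\mathrm{lcm}(n)}\equiv y^{n}\pmod{I}$, so that $P(y)=y^{n}\in R'[y]$ witnesses $s(R';R)\le \mathrm{lcm}(n)+n$. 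That endgame is correct and complete as written.

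The genuine gap is that the entire content of the theorem --- the specific value $n=x!^{(2x)^{x}x}$ --- lives in the step you defer. Two things are missing. First, you never show that $R'$ is finite. This requires the finite-difference argument: applying the $x$-th forward difference to the monic vanishing polynomial $y^{x}-\sum_{i<x}a_{i}y^{i}$ along the integer multiples of $1$ kills every term of degree less than $x$ and leaves $x!\cdot 1=0$, so $\mathrm{char}(R)\mid x!$ and $|R'|\le x!$. Without this, $R'$ could be all of $\mathbb{Z}$ and the count collapses. Second, your justification for the finiteness of $R''=R'[a_{0},\dots,a_{x-1}]$ is circular: the observation that each $a_{i}$ ``satisfies a monic degree-$x$ relation over $R''$ itself'' gives no information about $|R''|$, since every element of any ring satisfies a monic linear relation over that ring. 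What one needs is integrality of each $a_{i}$ over a subring \emph{not already containing it}, and the obstruction is precisely that the coefficients of the only monic relation available are the generators $a_{0},\dots,a_{x-1}$ themselves. Concretely, the relation for $a_{x-1}$ reads $a_{x-1}^{x}=a_{x-1}\cdot a_{x-1}^{x-1}+\sum_{j<x-1}a_{j}a_{x-1}^{j}$; its top terms cancel, so it fails to rewrite $a_{x-1}^{x}$ in terms of lower monomials, and the naive ``reduced monomial'' enumeration you invoke breaks down. Circumventing this self-reference (which is where the base $2x$ of the exponent tower comes from) is the real work of the theorem, and the proposal contains no idea for it.
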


We can apply the above theorem to the rings in $U$.

\begin{corollary} \label{bestlem}
We have $s(R';R)\leq \emph{lcm}(24^{2^{14}}) + 24^{2^{14}}$ for all rings $R$ in $U$.
\end{corollary}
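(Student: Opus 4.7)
The plan is to simply invoke the cited theorem of Specker et al.\ with the value of $s(R)$ established in the main theorem of this section. Since we have just proved that $s(R) = 4$ for every $R \in U$, the hypothesis of the theorem that $x = s(R)$ is finite is satisfied with $x = 4$, and we are entitled to the bound $s(R';R) \leq \mathrm{lcm}(n) + n$ for $n = x!^{(2x)^{x}x}$.

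The remaining step is a routine arithmetic simplification of the exponent when $x = 4$. First I would compute $x! = 4! = 24$, which gives the base of the exponential. Then I would compute the exponent $(2x)^{x}x = 8^{4}\cdot 4$. Rewriting $8^{4} = 2^{12}$ and multiplying by $4 = 2^{2}$ yields $(2x)^{x}x = 2^{14}$. Substituting into the formula for $n$ gives $n = 24^{2^{14}}$, and plugging this into $\mathrm{lcm}(n) + n$ yields exactly the claimed bound $s(R';R) \leq \mathrm{lcm}(24^{2^{14}}) + 24^{2^{14}}$.

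There is no genuine obstacle here: the corollary is a direct specialization of a previously established theorem, with the only nontrivial input being the value $s(R) = 4$ that we have already proved. The writeup should be a single short paragraph identifying $x = 4$, reducing the exponent $(2x)^{x}x$ to $2^{14}$, and citing the theorem.
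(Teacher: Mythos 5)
Your proposal is correct and matches the paper's approach exactly: the corollary is obtained by substituting $x = s(R) = 4$ into the cited theorem, and your arithmetic $n = 4!^{(2\cdot 4)^{4}\cdot 4} = 24^{8^{4}\cdot 4} = 24^{2^{14}}$ is precisely the computation the paper leaves implicit. Nothing further is needed.
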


Although this bound may seem large for smaller values of $a$, an interesting property about this bound is that it does not depend on the value of $a$. In other words, as the value of $a$ becomes increasingly large, the bound is not affected. Thus, even when the size of the ring approaches infinity, Corollary~\ref{bestlem} provides a finite bound for the value of $s(R';R)$.

\section{Vanishing Polynomials Over Rings of Prime Power Order} \label{van_of_prime}

In \cite{borodin2023ideal}, it is established that if a ring $R$ is a direct product of rings $R_{1}, R_{2}, \ldots, R_{n}$, then its vanishing polynomial ideal $I$ is a direct product of the vanishing polynomial ideals of $R_{1}, R_{2}, \ldots, R_{n}$. As a consequence, if we have a unique representation of all vanishing polynomials over $R_{1}, R_2, \ldots, R_n,$ we have a unique representation of all vanishing polynomials over $R$.

If we wish to find unique representations of all vanishing polynomials over all rings, it is important to find such representations for all finite commutative rings with identity. We also know that every finite commutative ring with identity can be expressed as the direct product of local commutative rings with identity of prime power order, as given in \cite{bini2002finite}. Therefore, if we find unique representations of all vanishing polynomials over all commutative rings of prime power orders with identity, we have such representations over all finite commutative rings with identity.

\subsection{Rings of order $p$}

Naturally, we begin by considering the simplest case of rings of prime order $p$, since proving or disproving a property for all rings of prime order will cover a large portion of the general case. Because $p$ is prime, we know that all commutative rings of prime order with identity are isomorphic to $\mathbb{Z}_{p}$, over which we already have a description of all the vanishing polynomials.

\subsection{Rings of order $p^2$}

We can now consider rings of order $p^2$, where $p$ is prime. This case is more nuanced than the last, but still within reach: every ring of order $p^2$ is isomorphic to either $\mathbb{Z}_{p} \times \mathbb{Z}_{p}$, $\mathbb{Z}_{p^2}$, $\text{GF}(p^2)$, or $\mathbb{Z}_{p}[x]/(x^2)$. We examine these subcases one by one. 

First, $\mathbb{Z}_{p} \times \mathbb{Z}_{p}$ is a direct product of two rings that we already have desired representations for, so we have such a representation for $\mathbb{Z}_{p} \times \mathbb{Z}_{p}$. 

Second, $\mathbb{Z}_{p^2}$ is a ring of integers modulo an integer, so we already have a unique representation for its vanishing polynomials. 

The third subcase is $\text{GF}(p^2)$ (the finite field of $p^2$ elements), whose elements we represent as $f_{1}, \ldots, f_{p^2}$. We now analyze this case.

Let $V(x)=(x-f_{1})(x-f_{2})\cdots(x-f_{p^2})$, where $\{f_{1}, f_{2}, \ldots, f_{p^2}\}$ are all the elements of $\text{GF}(p^2)$. Then, $V(x)$ is a vanishing polynomial, so all multiples of $V(x)$ are vanishing polynomials. Also, since there are no zero divisors in $\text{GF}(p^2)$, all vanishing polynomials are necessarily multiples of $(x-f_1)$, $(x-f_{2})$, \ldots, $(x-f_{p^2})$. Therefore, all vanishing polynomials are necessarily multiples of $V(x)$. Thus, there exists a one-to-one correspondence between vanishing polynomials and multiples of $V(x)$, so any vanishing polynomial $G(x)$ in $\text{GF}(p^2)$ can be uniquely represented as 

$$G(x)=F(x)V(x),$$

where $F(x)$ is a polynomial which is uniquely defined based on $G(x)$.

The fourth subcase $\mathbb{Z}_{p}[x]/(x^2)$ is the trickiest. To summarize the structure of the ring, it can be interpreted as the ring $\mathbb{Z}_{p}$ adjoined with a square nilpotent element.
Because $x$
appears as an element of $\mathbb{Z}_{p}[x]/(x^2)$, we will use the indeterminate $y$ when dealing with polynomials over $\mathbb{Z}_{p}[x]/(x^2)$. For example, $F(y)=yx$ is a polynomial $F(y) \in (\mathbb{Z}_{p}[x]/(x^2))[y]$, and it maps $1$ to $x$ and $x$ to $0$.

\begin{definition}
Let polynomial $P(y)=a_{n}y^{n}+ \dotsb + a_{1}y + a_{0}$, where $n \geq 2$. Then, the \textit{formal derivative} is defined as $P'(y)=na_{n}y^{n-1}+ \dotsb + 2a_{2}y + a_{1}$.
\end{definition}

\begin{proposition}
Let $J$ and $K$ be vanishing polynomials over $\mathbb{Z}_{p}$. Then, the polynomial $$G(y)=J(y) + x \cdot K(y)$$
vanishes in $\mathbb{Z}_{p}[x]/(x^2)$ if and only if $J'$ vanishes over $\mathbb{Z}_{p}$.
\end{proposition}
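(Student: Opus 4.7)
The plan is to compute $G(\alpha)$ directly at a general element $\alpha \in \mathbb{Z}_p[x]/(x^2)$, which has the form $\alpha = a+bx$ for $a,b \in \mathbb{Z}_p$, and then use the hypotheses on $J$ and $K$ to simplify.

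The key preliminary observation is a Taylor-type expansion modulo $x^2$: for any polynomial $P(y) \in \mathbb{Z}_p[y]$,
$$P(a+bx) \equiv P(a) + bx\,P'(a) \pmod{x^2}.$$
I would establish this by expanding each monomial via the binomial theorem. Writing $(a+bx)^i = \sum_{j=0}^{i} \binom{i}{j} a^{i-j}(bx)^j$ and using $x^2 = 0$, only the $j=0$ and $j=1$ terms survive, giving $(a+bx)^i = a^i + i\,a^{i-1}bx$. Summing over all monomials of $P$ with their coefficients yields the claimed identity, with the $bx\,P'(a)$ term matching the definition of the formal derivative.

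Applying this expansion to both $J$ and $K$, I would compute
\begin{align*}
G(a+bx) &= J(a+bx) + x \cdot K(a+bx) \\
&= \bigl(J(a) + bx\,J'(a)\bigr) + x\bigl(K(a) + bx\,K'(a)\bigr) \\
&= J(a) + x K(a) + bx\,J'(a),
\end{align*}
where the $bx^2 K'(a)$ term is killed by $x^2 = 0$. Since $J$ and $K$ are vanishing over $\mathbb{Z}_p$, both $J(a)$ and $K(a)$ are zero for every $a \in \mathbb{Z}_p$, so the expression collapses to $G(a+bx) = bx\,J'(a)$.

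For the final step, I would argue the biconditional: $G$ vanishes on $\mathbb{Z}_p[x]/(x^2)$ iff $bx\,J'(a) = 0$ for every choice of $a,b \in \mathbb{Z}_p$. The forward direction follows by taking $b=1$, which forces $x\,J'(a) = 0$ in $\mathbb{Z}_p[x]/(x^2)$; since $J'(a) \in \mathbb{Z}_p$ and multiplication by $x$ is injective on $\mathbb{Z}_p$ inside this quotient, this yields $J'(a) = 0$ for all $a$. The reverse direction is immediate. There is no real obstacle here; the only subtle point is justifying the reduction $P(a+bx) \equiv P(a) + bx\,P'(a) \pmod{x^2}$ carefully, after which the result is bookkeeping.
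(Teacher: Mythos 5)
Your proof is correct and rests on the same core computation as the paper's: expanding $(a+bx)^i \equiv a^i + i a^{i-1}bx \pmod{x^2}$ so that $P(a+bx) = P(a) + bx\,P'(a)$, with the formal derivative emerging from the degree-one term. The paper phrases this by substituting $y = Ax+B$ and splitting coefficients as $D_i x + E_i$ in a general polynomial, but the argument is the same; your version is simply a cleaner direct verification of the stated biconditional.
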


\begin{proof}
Let $G(y)$ be a vanishing polynomial in $\mathbb{Z}_{p}[x]/(x^2)$. Then, $$G(y)=\sum_{i=0}^{n} C_{i}y^{i}.$$
Substituting $y = Ax + B$ and $C_{i}=D_{i}x+E_{i}$ we have
\begin{align*}
    &G(y) \\
    &=\sum_{i=0}^{n} (D_{i}x+E_{i})(Ax + B)^{i} \\
    &= \sum_{i=1}^{n} (D_{i}x+E_{i})(Ax + B)^{i} + D_{0}x+E_{0} \\
    &= \sum_{i=1}^{n} (D_{i}x+E_{i})(iAB^{i-1}x + B^{i}) + D_{0}x+E_{0} \\
    &= \sum_{i=1}^{n}E_{i}B^{i}+E_{0}+x\sum_{i=1}^{n}(D_{i}B^{i}) +D_{0}x + xA\sum_{i=1}^{n}iB^{i-1}E_{i} \\
    &= \sum_{i=0}^{n}E_{i}B^{i} + x\sum_{i=0}^{n}(D_{i}B^{i}) +
    xA\sum_{i=0}^{n}(i+1)E_{i+1}B^{i} \\
    &= J(B) + x \cdot K(B) + x \cdot A \cdot J'(B)
\end{align*}
where $J(B)$ and $K(B)$ are polynomials in $\mathbb{Z}_{p}$. We immediately see that $J(B)$ is vanishing, and substituting $A=0$, we obtain that $K(B)$ is vanishing. Substituting $A=1$, we obtain that $J'(B)$ is also vanishing.
Now, substituting $B=y$ and $A=0$, we obtain $G(y)=J(y) + x \cdot K(y)$.

The converse is clear: let $A,B$ be elements of $\mathbb{Z}_{p}$ and $y=Ax+B$, and suppose $J$, $K$, $J'$ are vanishing over $\mathbb{Z}_{p}$. Then, $G(y)= J(B) + x \cdot K(B) + x \cdot A \cdot J'(B) = 0$. Therefore, since all elements of $\mathbb{Z}_{p}[x]/(x^2)$ can be represented as $Ax+B$, $G$ is vanishing on $\mathbb{Z}_{p}[x]/(x^2)$.
\end{proof}

\begin{example}
To illustrate this theorem, we will now consider some examples over $\mathbb{Z}_{2}[x]/(x^2)$.
\begin{enumerate}
    \item Suppose $J(y)=0$ and $K(y)=y(y+1)$. Then, both $J$ and $K$ vanish over $\mathbb{Z}_{2}$. In addition, $J'(y)=0$ vanishes over $\mathbb{Z}_{2}$. Now, notice that $G(y)=J(y)+x \cdot K(y)=xy(y+1)$ vanishes over $\mathbb{Z}_{2}[x]/(x^2)$.
    \item Suppose $J(y)=y^4+y^2$ and $K(y)=0$. It follows that both $J$ and $K$ vanish over $\mathbb{Z}_{2}$. Furthermore, $J'(y)=4y^3+2y=0$ vanishes over $\mathbb{Z}_{2}$. Now, notice that $G(y)=J(y)+x \cdot K(y)=y^4+y^2$ vanishes over $\mathbb{Z}_{2}[x]/(x^2)$.
    \item Suppose $J(y)=y(y+1)$ and $K(y)=0$. Then, both $J$ and $K$ vanish over $\mathbb{Z}_{2}$. However, $J'(y)=2y+1=1$ does not vanish over $\mathbb{Z}_{2}$. Now, notice that $G(y)=J(y)+x \cdot K(y)=y(y+1)$ does not vanish over $\mathbb{Z}_{2}[x]/(x^2)$: for a counterexample, see $K(x)=x \neq 0$.
\end{enumerate}
\end{example}


\section{Counting Roots of Polynomials} \label{counting_roots}

Before considering the problem of counting and bounding the number of roots of polynomials over rings, we start by considering a weaker notion, namely, considering whether the number of roots of these polynomials is finite.

More specifically, we aim to study what characteristics we can discern about a commutative nonzero ring (not necessarily with identity) with the property that every polynomial has a finite number of roots. In the case where the ring is finite, it easily follows that every polynomial has a finite number of roots as there are only a finite number of elements in the ring. In the case where the ring is infinite, the following theorem provides a characterization of such rings which satisfy the desired property. A similar result was given in \cite{drike_moos_dan_2016}, but we extend their proof to include rings not necessarily with identity. 

\begin{theorem} \label{thethm}
The only infinite rings $R$ which satisfy the property that all polynomials in $R[x]$ have finitely many roots are rings with no nonzero zero divisors.
\end{theorem}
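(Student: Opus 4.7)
The theorem is a characterization, so I would prove two directions: (a) if $R$ is infinite without nonzero zero divisors, then every $P(x)\in R[x]$ has finitely many roots, and (b) if $R$ is infinite and has a nonzero zero divisor, then some polynomial in $R[x]$ has infinitely many roots.

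For direction (a), the plan is to extend the classical ``a polynomial of degree $n$ over a domain has at most $n$ roots'' bound to rings without identity, where the factorization $P(x) = (x-r)Q(x)$ is not literally available in $R[x]$. The trick is to avoid factoring inside $R[x]$ and instead work at the level of evaluations. Given $P(x) = \sum_{i=0}^n a_i x^i$ with leading coefficient $a_n \neq 0$ and a root $r$, I would define
\[
Q(x) = \sum_{i=1}^n a_i\bigl(x^{i-1} + x^{i-2}r + \cdots + r^{i-1}\bigr) \in R[x],
\]
which is a genuine element of $R[x]$ since every coefficient is a sum of elements of $R$, and verify by direct expansion that $(s-r)Q(s) = P(s) - P(r)$ holds for every $s \in R$. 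If $P(s) = 0$ and $s \neq r$, then $(s-r)Q(s) = 0$ forces $Q(s) = 0$ by the no-zero-divisor hypothesis. Induction on $\deg P$, noting that $Q$ has degree $n-1$ with leading coefficient $a_n \neq 0$, gives the desired bound.

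For direction (b), I would produce a degree-one polynomial with infinitely many roots. Let $a \in R$ be a nonzero zero divisor, witnessed by $b \neq 0$ with $ab = 0$. The roots of $ax \in R[x]$ form the annihilator $\mathrm{Ann}(a) = \{r \in R : ar = 0\}$, an additive subgroup of $R$. The key observation is that the map $\phi : R \to R$, $r \mapsto ar$, is a homomorphism of additive groups with kernel $\mathrm{Ann}(a)$ and image $aR$, so $|R| = |\mathrm{Ann}(a)| \cdot |aR|$ as cardinalities. Since $R$ is infinite, either $\mathrm{Ann}(a)$ is infinite, in which case $ax$ is the desired polynomial, or $aR$ is infinite; in the latter case $ab = 0$ gives $b(ar) = (ba)r = 0$ for every $r \in R$, so $aR \subseteq \mathrm{Ann}(b)$, making $\mathrm{Ann}(b)$ infinite and $bx$ the desired polynomial.

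The main obstacle is the bookkeeping around the missing identity: in direction (a) one must resist the temptation to write $P(x) = (x-r)Q(x)$ as an identity inside $R[x]$ (since $x-r \notin R[x]$ when $1 \notin R$) and instead phrase everything via evaluations. Once that subtlety is handled, both directions reduce to short, elementary manipulations together with the cardinality/first-isomorphism-theorem argument in (b), and this is exactly where the extension beyond \cite{drike_moos_dan_2016} to the non-unital setting lives.
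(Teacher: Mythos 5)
Your proposal is correct, and its two halves relate to the paper's proof differently. For the converse (a nonzero zero divisor forces a polynomial with infinitely many roots), you and the paper use the same engine --- the multiplication map on the additive group of $R$ --- just packaged differently: the paper assumes $\mathrm{Ann}(a)$ is finite, picks a nonzero $y\in\mathrm{Ann}(a)$, observes $yR\subseteq\mathrm{Ann}(a)$ is finite, and pigeonholes the fibers of $r\mapsto yr$ to conclude $\mathrm{Ann}(y)$ is infinite; your version compresses this into the cardinality identity $|R|=|\mathrm{Ann}(a)|\cdot|aR|$ together with the observation $aR\subseteq\mathrm{Ann}(b)$. For the forward direction (no zero divisors implies finitely many roots) your route is genuinely different: the paper embeds $R$ into its field of fractions $K$ and quotes the root bound over fields, whereas you prove the degree bound directly inside $R$ via the evaluation-level division identity $(s-r)Q(s)=P(s)-P(r)$ and induction on $\deg P$. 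Your argument is more self-contained and confronts the non-unital setting head-on (correctly noting that $x-r\notin R[x]$ when $1\notin R$), at the cost of a short computation; the paper's is shorter but quietly relies on the fraction-field construction going through for a commutative rng without identity and without zero divisors (it does, with $b/b$ serving as the identity, but this is left implicit). Both are valid, and yours additionally yields the sharper quantitative conclusion that a nonzero polynomial of degree $n$ has at most $n$ roots.
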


\begin{proof}
Suppose that the infinite ring $R$ satisfies the given property. We wish to show that this implies that $R$ has no nonzero zero divisors.

Suppose for contradiction that $R$ does have a zero divisor, call it $a$. Then the roots of the polynomial $ax$ over $R$ are by definition the annihilator of $a$ over $R$ (denoted $\text{Ann}(a)$), which is well known to be an ideal of $R$. By our assumption that all polynomials with coefficients in $R$ have a finite number of roots, we have that the size of $\text{Ann}(a)$ is finite.

Let a nonzero element in $\text{Ann}(a)$ be denoted by $y$. It follows that $yR$ is finite because $yR\subseteq \text{Ann}(a),$ and we have already shown that $\text{Ann}(a)$ is finite. Consider the R-module homomorphism $\phi:R\to yR$ given by $\phi(r)=yr$. Because $yR$ is finite and $R$ is infinite, it follows that there is an element $yr$ in $yR$ that has infinitely many elements of $R$ mapping to it. Denote the set of elements in $R$  mapping to $yr$ by $M$. If $M$ is countably infinite, then denote the elements in $M$ by $\{r_1, r_2, r_3, \ldots\}$. Otherwise, we can choose a countably infinite subset of $M$, and denote the elements in this subset by $\{r_1, r_2, r_3, \ldots\}$; this follows directly from the axiom of choice.

Thus, for all $n\geq 2,$ we have that $yr_1=yr_n,$ and so $y(r_1-r_n)=0$. But because all the elements in $\{r_1, r_2, r_3, \ldots\}$ are distinct, it follows that all the elements in $\{r_1-r_n\}_{n=2}^{\infty}$ are also distinct, so there are an infinite number of roots to the equation $yx=0$. However, since $yx\in R[x],$ it is supposed to have a finite number of roots, a contradiction.

Thus, infinite rings which have zero divisors do not satisfy the condition that all polynomials in the ring have a finite number of roots.

We now show that if a ring has no zero divisors, then all polynomials over the ring have a finite number of roots. Suppose that $R$ has no zero divisors, and let $F(x)$ denote a polynomial in $R[x]$. Note that $K=\{\frac{a}{b} \mid a,b\in R, b\neq 0\}$ is a field. $F(x)$ is an element of $R[x],$ which is contained in $K[x]$. Since every polynomial over a field has a finite number of roots, it follows that $F(x)$ has a finite number of roots over $R[x],$ as desired.
\end{proof}

We now present a result that allows us to limit the number of roots a polynomial could have over a commutative ring that can be represented as a ring product.

Consider a finite commutative ring $R$ with identity which can be written as $R \cong R_1 \times R_2 \times \cdots \times R_k$. Thus, each element $x \in R$ can be represented as some tuple $(x_1, x_2, \ldots, x_k)$ where each $x_i$ is an element of $R_i$. Let us denote $n = |R|$ and $n_1 = |R_1|, n_2 = |R_2|, \ldots, n_k = |R_k|$. Take any polynomial $F: R \to R$. Equivalently, we can view $F$ as a $k$-tuple of polynomials $F_1, F_2, \ldots, F_k$ over $R_1, R_2, \ldots R_k$, respectively, as described by Lemma~\ref{polynomial_product}. Note that if an element $x$ is a root of $F$, that is, if $F(x) = 0$, it must be a root of $F_1, F_2,\ldots, F_k$ in each of $R_1, R_2, \ldots R_k$. However, rather than counting roots, we count elements that are not roots. Let us define $0 \leq a_i$ to be the number of elements that are not roots of $F_i$ in $R_i$ for $1\leq i\leq k$. To find the number of non-roots of $F$ over $R$, we must find the number of tuples $(F_1(x_1), F_2(x_2), \ldots, F_k(x_k))$ where not all elements are 0. 

We can use the principle of inclusion-exclusion to calculate this number. Each set of $a_i$ non-roots over $R_i$ contributes $(n/n_i)\cdot a_i$ tuples that are non-zero. However, all tuples where two entries are non-zero are double counted so we must subtract $(n/(n_i\cdot n_j))\cdot (a_i\cdot a_j)$ for every pair of $i, j$. Now all tuples that contain three non-zero entries have been added three times and subtracted three times so they must be added back. Continuing in this manner we find that the number of non-roots of $F$ over $R$ is 
$$
\sum_{i = 1}^{k} \sum_{j_1<j_2<\ldots< j_i\leq k} \frac{(-1)^{k+1}n\cdot (a_{j_1}\cdot a_{j_2} \cdot \ldots \cdot a_{j_i})}{n_{j_1}\cdot n_{j_2} \cdot \ldots \cdot n_{j_i}}.
$$
Thus, we can conclude that if a number cannot be represented in this way for any set of $a_1, a_2, \ldots, a_k$, a polynomial cannot have that number of non-roots. The converse is not necessarily true, since over $R_i$, not every number of roots of a polynomial is a possible $a_i$. Note that since we know $|R|$, if we know a polynomial cannot have $q$ non-roots over $R$, it directly follows that a polynomial cannot have $|R| - q$ roots. 

In the case of the squarefree integers, however, any function is possible over a field such as $\Z{p}$ for prime $p$, so all $a_i$'s are possible and the converse of the above statement holds. 

As an example, consider $\Z{6} = \Z{3} \times \Z{2}$. In this case every polynomial must have $2a_1 + 3a_2 - a_1a_2$ non-roots for some integers $0 \leq a_1 \leq 3$ and $0 \leq a_2 \leq 2$. Thus, one may easily verify that a polynomial over $\Z{6}$ cannot have exactly 1 non-root but can have 0, 2, 3, 4, 5, or 6 non-roots. By complimentary counting, we can conclude that a polynomial over $\Z{6}$ can have 0, 1, 2, 3, 4, or 6 roots but cannot have 5 roots.

\section{Acknowledgements}

We would like to thank our research advisor, Prof. James Barker Coykendall, for mentoring us throughout this research process. We would also like to thank Dr. Felix Gotti and Dr. Tanya Khovanova for reviewing the paper and providing many helpful comments for revision. Finally, we would like to express our gratitude to Dr. Pavel Etingof, Dr. Slava Gerovitch, and the MIT PRIMES program for making this project possible.


\begin{thebibliography}{00}

\bibitem{livni2013vanishing} R. Livni, D. Lehavi, S. Schein, H. Nachliely, S. Shalev-Shwartz, and A. Globerson, ``Vanishing component analysis,'' in International Conference on Machine Learning. New York, NY: Proceedings of Machine Learning Research, 2013, pp. 597--605.

\bibitem{geelen2023polynomial} R. Geelen, I. Iliashenko, J. Kang, and F. Vercauteren, ``On polynomial functions modulo pe and faster bootstrapping for homomorphic encryption,'' in Annual International Conference on the Theory and Applications of Cryptographic Techniques. Switzerland: Springer Nature, 2023, pp. 257--286.

\bibitem{meredith2007polynomial} M. B. Meredith, ``Polynomial Functions over Rings of Residue Classes of Integers,'' unpublished.

\bibitem{SINGMASTER} D. Singmaster, ``On polynomial functions (mod m),'' in Journal of Number Theory, Vol. 6, no. 5. Amsterdam: Elsevier, 1974, pp. 345--352.

\bibitem{Niven} I. M. Niven and L. J. Warren, ``A generalization of Fermat’s theorem,'' in Proceedings of the American Mathematical Society. Providence, RI: American Mathematical Society, 1957.

\bibitem{GREUEL} G. M. Greuel, F. Seelisch, and O. Wienand, ``The Gröbner basis of the ideal of vanishing polynomials,'' in Journal of Symbolic Computation, Vol. 46, no. 5. Amsterdam: Elsevier, 2011, pp. 561--570.

\bibitem{borodin2023ideal} M. Borodin, E. Liu, and J. Zhang, ``The Ideal of Vanishing Polynomials and the Ring of Polynomial Functions,'' unpublished.

\bibitem{specker} E. Specker, N. Hungerbühler, and M. Wasem, ``The Ring of Polyfunctions over $\mathbb {Z}/n\mathbb {Z}$,'' unpublished.

\bibitem{GUHA} A. Guha and A. Dukkipati, ``A faster algorithm for testing polynomial representability of functions over finite integer rings,'' in Theoretical Computer Science, Vol. 579. Amsterdam: Elsevier, 2015, pp. 88--89.

\bibitem{gilmer1999ideal} R. Gilmer, ``The ideal of polynomials vanishing on a commutative ring,'' in Proceedings of the American Mathematical Society, Vol. 127, no. 5. Providence, RI: American Mathematical Society, 1999.

\bibitem{specker2021polyfunctions} E. Specker, N. Hungerbühler, and M. Wasem, ``Polyfunctions over General Rings,'' in Journal of Algebra and Its Applications. Singapore: World Scientific Publishing, 2022.

\bibitem{bini2002finite} G. Bini and F. Flamini, Finite commutative rings and their applications. New York, NY: Springer, 2002.

\bibitem{drike_moos_dan_2016} Drike, MooS, and Dan, ``For which rings does a polynomial in $R$ have finitely many roots,'' 2016. Available: Mathematics Stack Exchange. [Accessed: August 3, 2023].
\end{thebibliography}
\end{document}